\newcommand{\ie}{\textit{i.e.}$\,$}
\newcommand{\C}{\mathbb{C}}
\newcommand{\Quat}{\mathbb{Q}}
\newcommand{\N}{\mathbb{N}}
\newcommand{\obj}{\operatorname{obj}}
\newcommand{\id}{\operatorname{id}}
\newcommand{\End}{\operatorname{End}}
\newcommand{\cat}{\mathbf{cat}}
\theoremstyle{definition}
\newtheorem{example}{Example}[section]
\newtheorem{definition}[example]{Definition}
\theoremstyle{plain}
\newtheorem{theorem}[example]{Theorem}
\newtheorem{corollary}[example]{Corollary}
\newtheorem{lemma}[example]{Lemma}
\numberwithin{equation}{section}
\begin{document}
\title[Frobenius algebra]{$S_3$-permuted Frobenius Algebras}
\author{Zbigniew Oziewicz}
\address{Universidad Nacional Aut\'{o}noma de M\'{e}xico\\Facultdad de Estudios Superiores de Cuautitl\'{a}n\\C. P. 54714 Cuautitl\'{a}n Izcalli, Apartado Postal \#25\\
Estado de \ M\'{e}xico, \ M\'{e}xico}\email{oziewicz@unam.mx}
\author{Gregory Peter Wene}
\address{Department of Mathematics\\The University of Texas at San Antonio\\
One UTSA Circle\\San Antonio, Texas \ 78249-0624}\email{greg.wene@utsa.edu}
\subjclass[2000]{Primary 05C38, 15A15; Secondary 05A15, 15A18}
\keywords{Frobenius Algebra, Nonassociative Algebra, Clifford Algebra, $S_3$-permutation}
\thanks{Submitted August 10, 2010. Published in Proceedings of the 4th International Conference on Mathematical Sciences for Advancement of Science and Technology, Kolkata (Calcutta), India, December 2010. Institute for Mathematics, Bio-informatics, Information-technology and Computer-science, IMBIC,  www.imbic.org/index.html}
\thanks{This work is Supported by Programa de Apoyo a Proyectos de Investigaci\'on e 
Innovaci\'on Te\-cno\-l\'ogica, UNAM, Grant PAPIIT \# IN104908, 2008--2010}

\begin{abstract} In the present paper by Frobenius algebra $Y$ we mean a finite dimensional algebra $Y$ possessing an $Y$-associative and invertible (nondegenerate) form (a scalar product $\cup$), referred to as the Frobenius $Y$-structure. The nondegenerate form $\cup $ has an inverse which we will denote as $\cap $. We drop the extra conditions of associativity and unitality of $Y.$ The Frobenius algebra $\{Y,\cup,\cap\}$ determine a ternary $(3\mapsto 0)$-tensor,
$$Y\circ\cup=\cup\circ Y\quad\text{or}\quad Y _{ij}^{e}\,\cup _{ek}=\cup_{ie}\,Y^e_{jk}.$$

Frobenius algebra is formulated within the monoidal abelian category of operad of graphs $\cat(m,n).$

Frobenius algebra allows $S_2$-permuted opposite algebra to be extended to $S_3$-permuted algebras. If $\sigma\in S_3$ denotes a permutation we can get a $\sigma$-permuted algebra given by 
$$\{Y,\cup,\cap\}\quad\xrightarrow{\quad\sigma\quad}\quad\cap\circ\sigma(Y\circ\cup).$$

Operad of graphs, \ie diagrammatic language, is used both to illustrate the construction as well
as a method of proof for the main Theorem. We give two detailed examples of
this construction for Clifford algebras. Our construction, however, applies to all Frobenius algebras.\end{abstract}\maketitle
\newpage\tableofcontents

\section{Semifields and projective planes, by Knuth in 1965} Knuth [Knuth 1965] realized that the multiplication constants $Y_{ij}^{k}$ of an n-dimensional k-algebra $Y\in\cat(2,1)$ determine a $n\times n\times n$ cube $A$, the cube associated with the $k$-algebra $Y,$ and examines the algebras that arise when the axes of the cube are permuted. Let $\sigma\in S_3$ is a permutation. Then $A^\sigma$ will represent the 3-cube $A$ with subscripts permuted by $\sigma.$ 

Knuth [Knuth 1965] showed that if $\pi $ is a finite projective plane coordinatized by the division ring $S$ and $A$ is the cube associated with $S,$ then the six permutations of the indices of $A$ determine a series of at most six planes.

The algebra corresponding to $A^{\left( 12\right) }$ is called the opposite
algebra and is denoted by $Y^{op}$. Pairing each algebra with its opposite we have
$$\begin{tabular}{c|c}$Y\simeq A$ & $Y^{\text{op}}\simeq A^{(23)}$\\\hline\\[-7pt]$A^{(123)}$ & $A^{(23)}=A^{(123)(12)}$ \\ \hline\\[-7pt]
$A^{\left( 132)\right) }$ & $A^{(13)}=A^{(132)(12)}$ \\\hline
\end{tabular}$$\smallskip

We note that every algebra $Y\in\cat(2,1)$ is a mixed tensor, 2-covariant and 1-contra-variant, and every $S_3$-permutation of $Y$ involve implicitly the existence of the non-degenerate scalar product. Moreover in order to assure that $S_3$-permutation leads to the unique permuted new algebra $\sigma Y\,\in\cat(2,1),$ this implicit scalar product must be necessarily $Y$-associative. This is equivalent to say that an algebra $Y\in\cat(2,1)$ must be a Frobenius algebra. In what follows we will assume that an algebra $Y\in\cat(2,1)$ is a Frobenius algebra.  

Frobenius algebra is usually defined to be an associative and unital algebra $Y$ possessing a (left or right) $Y$-module isomorphism, or, equivalently, possessing a $Y$-associative invertible scalar product, denoted here by  $\cup\in\cat(2,0),$ called the Frobenius structure or the Frobenius pairing, see e.g. [Eilenberg and Nakayama 1955; Caenepeel, Militaru, Zhu 2002, Definition 3 on page 32; Kock 2003, pages 95--97, Definition 2.2.5].
 
Let $Y\in\cat(2,1)$ denote a finite dimension k-algebra, not necessarily associative, equipped with an $Y$-associative nondegenerate form $\cup\in\cat(2,0).$ The ternary tensor (the ternary scalar product), $Y\circ\,\cup=\cup\circ\,Y\in\cat(3,0),$ is defined in terms of the multiplication tensor $Y _{ij}^{k}$ and the  form $\cup\in\cat(2,0),$
\begin{equation}
(Y\circ\cup) _{ijk}=Y _{ij}^{e}\cup _{ek}.\end{equation}

Since the  form $\cup$ is nondegenerate, it has an inverse, $\cap\in\cat(0,2),$ and 
\begin{gather}\cup\circ\cap=\cap\circ\cup=\arrowvert\quad\in\cat(1,1),\end{gather}
\begin{eqnarray}Y _{ij}^{f}\cup _{fk}\cap ^{ke} &=&(Y\circ\cup) _{ijk}\cap ^{ke} \\
Y _{ij}^{e} &=&(Y\circ\cup) _{ijk}\cap ^{ke}.\end{eqnarray}

We can permute the ternary scalar tensor to get
\begin{gather}(\sigma Y)_{ij}^{k}=(Y\circ\cup) _{\sigma(ije)}\cap^{ek}\end{gather}
The multiplication tensor $(\sigma Y)_{ij}^{k}$ will determine a new $k$-algebra, that needs not to be necessarily a Frobenius algebra. 

\section{Some Nonassociative Algebra Preliminaries} We work within monoidal abelian category generated by a single object. 

Since the algebras we will be working with are not necessarily associative,
we will use the associator of three elements $a,b,c$ of the algebra $Y,$
\begin{gather}\left( a,b,c\right) =(ab)c-a(bc)\quad\in\cat(3,1).\end{gather}
The above associator needs that a monoidal category must be abelian.

The left nucleus of $Y$ is the set 
\begin{gather}N_{l}=\left\{ l\in Y:\left( l,b,c\right) =0\text{ for all }b,c\in \right\}.\end{gather}

The middle nucleus of $Y$ is the set 
\begin{gather}N_{m}=\left\{ m\in Y:\left(a,m,c\right) =0\text{ for all }a,c\in Y\right\}.\end{gather}
The right nucleus of $Y$ is the set \begin{gather}N_{r}=\left\{ r\in Y:\left(
a,b,r\right) =0\text{ for all }a,b\in Y\right\}.\end{gather} The nucleus is
the set $N=N_{l}\cap N_{m}\cap N_{r}$. 

We will denote the associator in the $\sigma$-permuted Frobenius algebra $\sigma Y,$ $\sigma \neq\id$, by $\left( a,b,c\right)^{\circ }.$

An algebra $Y\in\cat(2,1)$ is said to be flexible if for all $a,b\in Y,$\begin{equation}
\left( a,b,a\right) =0.\end{equation}
The flexible property seems to be a minimum
requirement in the existing studies of nonassociative real division algebras
[Althoen and Kugler 1983, Benkart, Britten and Osborn 1982, Darpo 2006] and Malcev-admissible algebra [Myung 1986]. The associative, alternative and Jordan algebras
all enjoy the flexible property. An even weaker association property is the
association of cubes for all $a\in Y,$
\begin{gather}\left( a,a,a\right) =0.\end{gather}

Clearly, the flexible property implies the association of cubes. The association of cubes appears to be a "minimal" regularity condition that permits the analysis of an algebra. In
his article Osborn [Osborn 1972] begins the study of identities on
noncommutative algebras insisting that the algebras satisfies the
association of cubes. We will see that the nonassociative Frobenius algebras
do not satisfy this very weak associativity condition.

The concept of isotopism was introduced by Albert to provide a broader
classification of the many nonassociative algebras than that of isomorphism
[Tomber 1979]. Algebras $Y$ and $Y%
^{\prime }$ with products $\ast $ and $\circ $ are isotopic if and only if
there exists nonsingular linear transformations $F,G,H$ from $Y$
to $Y^{\prime }$ such that
\begin{gather} (a\circ b)H=aF\ast bG.\end{gather}

\noindent The relation "isotopic to" is an equivalence relation.

Clearly, an isotope of a division algebra is a division algebra. An algebra
is said to be isotopically simple if every isotope of it is simple. We note
that any isotope of a simple Clifford algebra will again be simple.

It is well known [Knuth 1965] that we can make a division
algebra without a unit element into a division algebra with unit element $e$
under a new product

\begin{gather}\left( a\circ e\right) \star \left( e\circ b\right) =a\circ b\text{.}\end{gather}

The classical introduction to the theory of nonassociative algebras is the
book by Schafer [Schafer 1966]. The article [Tomber 1979]
gives a history of nonassociative algebras.

\section{Main Results} We work within a bi-closed monoidal category (another name a tensor category) generated by a single object. In this case the set of all objects coincides with the set of non-negative integers (with the set of natural numbers) $\N.$ Thus the set of all objects is, $\obj\cat=\N,$ and $1\in\N$ is a generating object. This leads to graphical language where
each morphism (an arrow of a category) is characterized by a \textit{pair} of non-negative integers, morphims are bi-graded, and we refer to this pair  \{input, output\} = \{entrance, exit\}, as to the type or arity of the morphism = \{arity-in, arity-out\},
\begin{gather}\N\ni m\quad\xrightarrow{\quad\text{morphisms of $(m\rightarrow n)$-arity\quad}}\quad n\in\N.\end{gather}

\begin{definition}[Frobenius algebra] A not necessarily associative and not necessarily unital algebra $Y\in\cat(2,1)$ is said to be a Frobenius algebra if exists a morphism $\cup\in\cat(2,0)$ such that the following two conditions hold
\begin{gather}Y\circ\cup=\cup\circ Y\quad\in\cat(3,0)\label{F0}\\
\cup\circ\cap=\cap\circ\cup=\arrowvert=\id\quad\in\cat(1,1).\end{gather} 
Within graphical or graphics language, the first of the above Frobenius condition (a solvable Frobenius algebra if $\cup\neq 0$) is precisely the relation among two morphisms from $\cat(3,0)$ as follows\bigskip
\begin{center}
\begin{tikzpicture}[line width=1pt]
\draw(-2,0) .. controls (-1.7,-1) and (-1.3,-1) .. (-1,0);
\draw(-1.5,-0.8)..controls(-1,-2) and (-0.5,-2)..(0,0);
\draw(1,-1) node {$\sim$};
\draw(3,0) .. controls (3.3,-1) and (3.7,-1) .. (4,0);
\draw(3.5,-0.8)..controls(3,-2) and (2.5,-2)..(2,0);
\end{tikzpicture}\end{center}
\end{definition}

\begin{theorem}\label{main} A necessary condition that the $k$-algebra $Y$ has nondegenerate $Y$-associative form $\cup$ is the trace of the matrix for right multiplication by every element $x$ of $Y$ is the same as the trace of the matrix for left multiplication by the element $x.$ This must hold for the traces of every power of the regular, left- and right-, representations. 
\end{theorem}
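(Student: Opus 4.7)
The strategy is to convert the Frobenius identity into a linear conjugacy between right and left multiplication and then invoke invariance of the trace under conjugation and under transposition. Crucially, no associativity of $Y$ is required, which matters since the paper has already declared $Y$ to be not necessarily associative.

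\emph{Step 1 (conjugacy of $R_{y}$ with $L_{y}^{T}$).} Rewrite the Frobenius relation $Y\circ\cup=\cup\circ Y$ in bilinear-form language as $\cup(xy,z)=\cup(x,yz)$. Viewing $\cup$ as the linear isomorphism $\tilde\cup\colon Y\to Y^{*}$, $a\mapsto \cup(a,\cdot)$, with inverse furnished by $\cap$, this identity reads $\cup(R_{y}a,c)=\cup(a,L_{y}c)$ for all $a,c\in Y$, which is precisely
\[\tilde\cup\circ R_{y}=L_{y}^{T}\circ\tilde\cup,\qquad R_{y}=\tilde\cup^{-1}\circ L_{y}^{T}\circ\tilde\cup.\]
Here $L_{y},R_{y}\in\End(Y)$ are ordinary vector-space endomorphisms, so their iteration makes sense independently of how $Y$ multiplies.

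\emph{Step 2 (powers and traces).} Iterating the conjugacy gives
\[R_{y}^{n}=\tilde\cup^{-1}\circ(L_{y}^{T})^{n}\circ\tilde\cup=\tilde\cup^{-1}\circ(L_{y}^{n})^{T}\circ\tilde\cup.\]
Using cyclicity of the trace and its invariance under transposition,
\[\tr(R_{y}^{n})=\tr\bigl((L_{y}^{n})^{T}\bigr)=\tr(L_{y}^{n}).\]
Taking $n=1$ gives $\tr(L_{x})=\tr(R_{x})$ for every $x\in Y$, and arbitrary $n$ yields the stated claim for powers of the regular representations.

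\emph{Diagrammatic reading and main obstacle.} In $\cat(0,0)$, $\tr(L_{x}^{n})$ is the closed loop built from $\cap$ at the bottom, $n$ insertions of left multiplication by $x$ on one strand, and $\cup$ at the top. Each use of the Frobenius picture displayed in the Definition slides one multiplication past $\cup$ onto the opposite strand, turning a left insertion of $x$ into a right insertion; performing this move $n$ times around the loop transforms the $\tr(L_{x}^{n})$-diagram into the $\tr(R_{x}^{n})$-diagram, which is the pictorial counterpart of Steps~1--2. The one subtle point is in Step~2: because $Y$ need not be associative, $L_{y}^{n}\neq L_{y^{n}}$ in general, so the argument must be carried out on iterated endomorphisms rather than on multiplication by a power of $y$. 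This is the single place where the associative habit must be resisted; everything else is formal.
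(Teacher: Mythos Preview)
Your proof is correct and follows essentially the same route as the paper: the Frobenius identity is converted into a conjugacy between right multiplication and the transpose of left multiplication---your equation $\tilde\cup\circ R_{y}=L_{y}^{T}\circ\tilde\cup$ is precisely the linear-algebra reading of the paper's graphical relations \eqref{F1}--\eqref{F4}---and one then passes to traces. The paper carries this out entirely in string diagrams and leaves the higher-power statement to the reader, whereas you give the classical formulation and treat arbitrary $n$ explicitly; the underlying argument is the same.
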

\begin{proof} One can compose \eqref{F0} on both sides with inverse of the Frobenius structure, $\cap\equiv\cup^{-1},$
\begin{gather}Y\circ\cup\circ\cap=Y\circ\arrowvert=Y=\cup\circ Y\circ\cap,\\
\cap\circ\cup\circ Y=\arrowvert\circ Y=Y=\cap\circ Y\circ\cup,\\
\cap\circ Y\circ\cup=Y=\cup\circ Y\circ\cap.\label{F1}\end{gather}
The last conditions \eqref{F1} within operad of graphs is precisely the following graphical relations
\vspace{-20mm}
\begin{gather}\begin{tikzpicture}[line width=1pt]
\draw(-0.5,1)..controls(-0.2,-0.3) and (0.2,-0.3)..(0.5,1);
\draw(0,0)--(0,-1);
\draw(-1.5,0) node {$\sim$};\draw(1.5,0) node {$\sim$};
\draw(-4.5,-1)..controls(-4,3) and (-3.5,-2)..(-3.2,1);
\draw(4.5,-1)..controls(4,3) and (3.5,-2)..(3.2,1);
\draw(-3.5,0)..controls(-3.2,-0.9) and (-2.8,-0.9)..(-2.5,1);
\draw(3.5,0)..controls(3.2,-0.9) and (2.8,-0.9)..(2.5,1);
\end{tikzpicture}\label{F2}\end{gather}

Every algebra $Y$ can be seen as a right $Y$-module and as a left $Y$-module, in \textit{two} ways, on both dual objects. An associative algebra $Y$ is a two-sided $Y$-module, known as $Y$-bimodule, and this $Y$-bimodule structure is again doubled on both dual objects. 

An element of an algebra $Y$ can be seen as a morphism $\in\cat(0,1).$ The left- and the right- composition of $Y\in\cat(2,1)$ with $\cat(0,1)$ gives the two regular representations,
\begin{gather}\cat(0,1)\circ\cat(2,1)\quad\subset\quad\cat(1,1)\simeq\End(1,1),\\
\cat(2,1)\circ\cat(0,1)\quad\subset\quad\cat(1,1)\simeq\End(1,1).\end{gather}
\begin{center}\begin{tikzpicture}[line width=1pt]
\draw(-3.3,0)--(-3.3,-1);\draw(-3.3,0)node{$\bullet$};
\draw(-2.5,0)node{$\circ$};\draw(-2,1)..controls(-1.7,-0.1)and(-1.3,-0.1)..(-1,1);
\draw(-1.5,0.2)--(-1.5,-1);\draw(0,0)node{$\equiv$};
\draw(1,0.5)..controls(1.3,-0.2)and(1.7,-0.2)..(2,1);
\draw(1.5,0)--(1.5,-1);\draw(1,0.5)node{$\bullet$};
\end{tikzpicture}\end{center}

\begin{center}\begin{tikzpicture}[line width=1pt]
\draw(-1,0)--(-1,-1);\draw(-1,0)node{$\bullet$};
\draw(-2,0)node{$\circ$};\draw(-4,1)..controls(-3.7,-0.1)and(-3.3,-0.1)..(-3,1);
\draw(-3.5,0.2)--(-3.5,-1);\draw(0,0)node{$\equiv$};
\draw(1,1)..controls(1.3,-0.2)and(1.7,-0.2)..(2,0.5);
\draw(1.5,0)--(1.5,-1);\draw(2,0.5)node{$\bullet$};
\end{tikzpicture}\end{center}

Composing $\cat(0,1)$ with \eqref{F1}-\eqref{F2} we arrive to the following relations among endomorphisms within $\cat(1,1),$
\vspace{-15mm}
\begin{center}\begin{tikzpicture}[line width=1pt]
\draw(-0.5,0.5)..controls(-0.2,-0.3) and (0.2,-0.3)..(0.5,1);
\draw(-0.5,0.5)node{$\bullet$};
\draw(0,0)--(0,-1);
\draw(-1.5,0) node {$\sim$};\draw(1.5,0) node {$\sim$};
\draw(-4.5,-1)..controls(-4,3) and (-3.5,-2)..(-3.2,0.5);
\draw(-3.2,0.5)node{$\bullet$};
\draw(4.5,-1)..controls(4,3) and (3.5,-2)..(3.2,1);
\draw(-3.5,-0.2)..controls(-3.2,-0.9) and (-2.8,-0.9)..(-2.5,1);
\draw(3.5,0)..controls(3.2,-0.9) and (2.8,-0.9)..(2.5,0.5);
\draw(2.5,0.5)node{$\bullet$};
\end{tikzpicture}\label{F3}\end{center}
\vspace{-25mm}
\begin{gather}\begin{tikzpicture}[line width=1pt]
\draw(-0.5,1)..controls(-0.2,-0.3) and (0.2,-0.3)..(0.5,0.5);
\draw(0.5,0.5)node{$\bullet$};
\draw(0,0)--(0,-1);
\draw(-1.5,0) node {$\sim$};\draw(1.5,0) node {$\sim$};
\draw(-4.5,-1)..controls(-4,3) and (-3.5,-2)..(-3.2,1);
\draw(4.5,-1)..controls(4,3) and (3.5,-2)..(3.2,0.5);
\draw(3.2,0.5)node{$\bullet$};
\draw(-3.5,0)..controls(-3.2,-0.9) and (-2.8,-0.9)..(-2.5,0.5);
\draw(-2.5,0.5)node{$\bullet$};
\draw(3.5,-0.2)..controls(3.2,-0.9) and (2.8,-0.9)..(2.5,1);
\end{tikzpicture}\label{F4}\end{gather}

Our next aim is to calculate the traces in a ring $k\equiv\cat(0,0),$
\begin{gather}\text{trace}(\ldots)\equiv\text{evaluation}\circ(\ldots)\circ\text{co-evaluation}.\end{gather}
%\vspace{-25mm}
\begin{gather}\begin{tikzpicture}[line width=1pt]
\draw(-5.5,0)circle(0.5cm);
\draw(-2.5,0)circle(0.5cm);
\draw(-4,0)node{$\sim$};
\draw(-6.5,1)node{$\bullet$};
\draw(-6.5,1)..controls(-6.5,0)and(-6.3,0)..(-6,0);
\draw(-1.5,1)node{$\bullet$};
\draw(-1.5,1)..controls(-1.5,0)and(-1.7,0)..(-2,0);
\end{tikzpicture}\label{F5}\end{gather}
Two circles in \eqref{F5} are $\cup$-free and they are given in terms of evaluation and co-evaluation  of the dual objects, \ie in terms of the bi-closed structure [Kelly and Laplaza 1980]. 

The relation \eqref{F5} is desired equality of traces of the regular representations, and this is $\cup$-free, \ie it is the necessary condition for an algebra $Y$ to be the Frobenius algebra. 

We left to the reader the graphical proof that not only traces of the regular representations of the Frobenius algebra must be equal, but as well as the traces of all \textit{powers} of regular representations must be the same. This proves the theorem about the necessary condition for an algebra $Y\in\cat(2,1)$ to be the Frobenius algebra.\end{proof}

\begin{corollary} The relations \eqref{F4} leads also to the following consequence, that is useless for our searching of the necessary condition for an algebra $Y\in\cat(2,1)$ to be a Frobenius algebra, in terms of $Y$ alone. The following relation within $\cat(0,0)$ involve the Frobenius structure besides of an algebra 
%\vspace{-25mm}
\begin{gather}\begin{tikzpicture}[line width=1pt]
\draw(2.5,0)circle(0.5cm);
\draw(5.5,0)circle(0.5cm);
\draw(4,0)node{$\sim$};
\draw(1,1)node{$\bullet$};
\draw(1,1)..controls(1,-1)and(2.5,-1)..(2.5,-0.5);
\draw(6.5,1)node{$\bullet$};
\draw(5.5,-0.5)..controls(5.5,-1)and(6.5,-1)..(6.5,1);
\end{tikzpicture}\label{F6}\end{gather}
Note that the circles in \eqref{F5} do not have the same meaning as in \eqref{F6}. The two circles in \eqref{F6} are $(\cap,\cup)$-dependent, \ie the animals in \eqref{F6} involve the both Frobenius structures, $\cup$ and $\cap.$\end{corollary}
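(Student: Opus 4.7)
The plan is to derive relation \eqref{F6} by closing up the endomorphism relation \eqref{F4} in $\cat(0,0)$, but using the Frobenius pairing $\cup$ and copairing $\cap$ in place of the canonical evaluation/coevaluation of the bi-closed structure that produced \eqref{F5}. First I would take the two outer diagrams in \eqref{F4}: each lives in $\cat(1,1)$ and represents right (or left) multiplication by the element depicted as $\bullet$, but rewritten through the Frobenius structure so that the active strand passes through a $\cup$-$\cap$ bigon. The middle diagram of \eqref{F4} plays no role here; the relevant data is just the equality of the two outer endomorphisms.

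Next I would cap off the remaining free strand. For \eqref{F5} this was done with the bi-closed $\operatorname{ev}$/$\operatorname{coev}$, so the resulting ``circles'' are pure categorical traces of left- and right-multiplication, independent of $\cup$. For \eqref{F6} the plan is instead to hook the top input to the bottom output by using a further $\cap\in\cat(0,2)$ on top and a further $\cup\in\cat(2,0)$ on bottom (the only natural way, since the strand type forces one to use $\cap$ and $\cup$ once the bi-closed units are no longer in play). A short diagrammatic calculation then shows that the closed-up left endomorphism from \eqref{F4} becomes the left ``circle with $\bullet$ attached'' of \eqref{F6}, and similarly on the right. Equality of these two closed diagrams is then immediate from \eqref{F4} itself.

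The only genuine step to check is the topological identification of the closed-up diagrams with those drawn in \eqref{F6}. This comes down to sliding the $\bullet$ around the new $\cap$-$\cup$ loop using the snake/zig-zag identities $\cup\circ\cap=\cap\circ\cup=\id$ given in the definition of Frobenius algebra, together with the equivalent forms of Frobenius associativity \eqref{F1}. After these slides, each side of the closed-up \eqref{F4} reads as a single $Y$ composed with a $\cup$ on one pair of outputs and a $\cap$ on another, with the $\bullet$ dangling from the surviving strand, which is precisely the left/right loop of \eqref{F6}.

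Finally I would record the interpretive point that makes this a corollary rather than another necessary condition. The circles of \eqref{F5} are built from the canonical duality morphisms of $\cat$, so that identity is an assertion purely about $Y$ (equality of traces of the regular representations and their powers). The circles of \eqref{F6}, by contrast, are built from the Frobenius morphisms $\cup,\cap$ themselves; the identity therefore mixes $Y$ and its Frobenius structure and cannot be re-expressed as a condition on $Y$ alone. No obstacle is expected beyond a careful bookkeeping of the zig-zag slides in the graphical calculus, which is why the author remarks that this consequence is ``useless'' for isolating a necessary condition on $Y$.
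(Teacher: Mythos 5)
Your proposal is correct and follows the route the paper itself gestures at: the paper states this corollary without any written proof, asserting only that it ``follows from \eqref{F4}'' and that the circles are $(\cap,\cup)$-dependent, and your reconstruction---closing the endomorphism relation \eqref{F4} with the Frobenius pairing $\cup$ and copairing $\cap$ in place of the canonical evaluation/co-evaluation used for \eqref{F5}, then simplifying with the zig-zag identities and \eqref{F1}---is exactly the intended argument. Your interpretive remark about why the resulting identity is ``useless'' as a condition on $Y$ alone also matches the paper's stated reading.
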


\begin{corollary} Another extra bonus corollary that follows from \eqref{F4} is the following relation
%\vspace{-25mm}
\begin{gather}\begin{tikzpicture}[line width=1pt]
\draw(-2.5,0)circle(0.5cm);
\draw(-1.5,1)node{$\bullet$};
\draw(-1.5,1)..controls(-1.5,0)and(-1.7,0)..(-2,0);
\draw(0,0)node{$\sim$};
\draw(2.5,0)circle(0.5cm);
\draw(1,1)node{$\bullet$};
\draw(1,1)..controls(1,-1)and(2.5,-1)..(2.5,-0.5);
\end{tikzpicture}\label{F7}\end{gather}
\end{corollary}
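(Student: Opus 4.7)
The strategy is to apply the categorical trace to both sides of \eqref{F4}. Since \eqref{F4} equates three endomorphisms in $\cat(1,1)$, closing their top input and bottom output by the bi-closed co-evaluation and evaluation yields equal scalars in $\cat(0,0)$. The trick is that the \emph{two} closed pictures that correspond to the middle and rightmost diagrams of \eqref{F4} differ enough in their internal structure to be drawn as the two very different-looking loops of \eqref{F7}.

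First I would apply this closure to the middle diagram of \eqref{F4} — the ``bare'' endomorphism with the bullet attached to the incoming strand by a short cup and a free output strand going down. The snake (yanking) identities for the bi-closed ev/coev collapse this into a single $\cup$-free loop carrying the bullet on a short hook on its side. This is precisely the left-hand diagram of \eqref{F7}, and it coincides with the LHS circle of \eqref{F5}.

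Next I would apply the same bi-closed closure to the rightmost diagram of \eqref{F4}, where the bullet sits on a long S-curve and an explicit Frobenius cup remains in the background. Here the snake identities act only on the external trace strands; the Frobenius cup persists, and together with the cap inserted by the closure it assembles into a $(\cup,\cap)$-dependent loop. The S-curve carrying the bullet redraws, by planar isotopy, as a long exterior strand that wraps around this loop and returns to meet it, reproducing the right-hand diagram of \eqref{F7} (i.e.\ the LHS circle of \eqref{F6}). Since the middle and rightmost of \eqref{F4} are equal as endomorphisms, their traces coincide, which is exactly the content of \eqref{F7}.

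The main technical obstacle is precisely the second closure: one has to verify, by a patient planar-isotopy argument, that the Frobenius cup of the rightmost picture of \eqref{F4} is \emph{not} swallowed by the bi-closed trace strands (which would collapse the result back to a $\cup$-free hook and merely reproduce the LHS of \eqref{F7}), but instead survives and combines with the cap produced by the closure to form the $(\cup,\cap)$-dependent circle on the right-hand side of \eqref{F7}. Once this survival of the internal Frobenius structure is tracked carefully through the tikz, the two traces can be identified with the two sides of \eqref{F7} by pure graph isotopy, and the corollary is immediate from \eqref{F4}.
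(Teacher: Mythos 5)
Your proposal is correct and follows exactly the mechanism the paper sets up: the paper states this corollary without any explicit proof, but the trace-as-evaluation-composed-with-co-evaluation closure of the endomorphism identities \eqref{F4}, with the key observation that one closure is $\cup$-free while the other retains the $(\cup,\cap)$ pair as the loop itself, is precisely the argument used for \eqref{F5} in the proof of Theorem~\ref{main} and implicitly invoked here. Your flagged ``technical obstacle'' (that the Frobenius cup must survive rather than cancel against the closure) is indeed the only point distinguishing \eqref{F7} from \eqref{F5}, and your resolution by planar isotopy is the intended one.
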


\begin{example}[Kock 2003] The algebra of real upper triangular matrices does not
admit a nondegenerate associative  form [Kock 2003].
Letting $e_{11}$, $e_{11}$, and $e_{11}$ be an ordered basis for this
algebra, we see that the matrix for right multiplication by $e_{11}$ is
\begin{gather}
\left[ \begin{array}{ccc}
1 & 0 & 0 \\ 
0 & 0 & 0 \\ 
0 & 0 & 0%
\end{array}%
\right] \end{gather}%
and the matrix \ for left multiplication by $e_{11}$ is

\begin{gather}
\left[ 
\begin{array}{ccc}
1 & 0 & 0 \\ 
0 & 1 & 0 \\ 
0 & 0 & 0%
\end{array}%
\right] .
\end{gather}%
The traces are different.\end{example}

\section{Other results}
\begin{lemma}\label{l1} Let $Y$ denote a finite dimensional Frobenius $k-$algebra with
the Frobenius structure $\cup$ that is diagonal matrix in some basis. The algebra $Y^{(12)}$ is the algebra $Y^{op}$.\end{lemma}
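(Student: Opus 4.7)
The plan is to read the claim directly off the permutation formula $(\sigma Y)_{ij}^{k}=(Y\circ\cup)_{\sigma(ije)}\cap^{ek}$ established in Section 1, specialized to the transposition $\sigma=(12)$. First I would apply the transposition to the index triple $(i,j,e)$: it swaps $i$ and $j$ and leaves the contracted index $e$ in place. Combined with the defining expansion $(Y\circ\cup)_{jie}=Y_{ji}^{f}\cup_{fe}$, this produces
\begin{equation*}
(Y^{(12)})_{ij}^{k}=Y_{ji}^{f}\cup_{fe}\cap^{ek}.
\end{equation*}

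The second step is to collapse the Frobenius cup-cap pair on the right using the inverse relation $\cup\circ\cap=\id$, which in indices reads $\cup_{fe}\cap^{ek}=\delta_{f}^{k}$. Substituting reduces the right-hand side to $Y_{ji}^{k}$, and this is by definition the structure tensor of the opposite algebra $Y^{op}$. One should also spell out, in the graphical language of Section 3, that the same computation is the snake identity: bending the strand on the outgoing leg of $Y$ through $\cup$ and back through $\cap$ cancels, leaving a $Y$-vertex with its two input strands crossed, which is exactly the graphical picture of $Y^{op}$.

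The role of the diagonal hypothesis enters only to remove a potential ambiguity in index ordering. Because $\cup$ is diagonal in some basis, it is symmetric in that basis and so is $\cap$; consequently $\cap^{ek}=\cap^{ke}$, and the contraction $\cup_{fe}\cap^{ek}$ used in the permutation formula agrees with the contraction $\cup_{fk}\cap^{ke}$ appearing in the expression $Y_{ij}^{e}=(Y\circ\cup)_{ijk}\cap^{ke}$ from Section 1. Without this symmetry one would have to track "left inverse" versus "right inverse" conventions for $\cap$, and the identification of $(Y^{(12)})_{ij}^{k}$ with $Y_{ji}^{k}$ would have to be stated up to this ordering choice.

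I do not expect any serious obstacle: the argument is essentially an index calculation, with the only substantive input being the Frobenius inverse relation $\cup\circ\cap=\id$. The only point requiring care is the bookkeeping of the index conventions in $\cap^{ek}$, which is precisely what the diagonal hypothesis is there to trivialize; once symmetry of $\cup$ is in hand, the passage from $(Y^{(12)})_{ij}^{k}$ to $Y_{ji}^{k}$ is immediate.
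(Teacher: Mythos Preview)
Your argument is correct and follows the same underlying idea as the paper: apply the permutation formula with $\sigma=(12)$, which swaps the first two indices of $(Y\circ\cup)$, and then cancel the cup against the cap to recover $Y_{ji}^{k}$.

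The only difference is in how the cup--cap cancellation is carried out. The paper exploits the diagonal hypothesis from the first line, writing $(Y\circ\cup)_{ijk}=Y_{ij}^{k}\cup_{kk}$ with no summation, and then observes that $\cup_{kk}\cap^{kk}=1$ because diagonal inverse matrices have reciprocal diagonal entries. You instead keep the contracted index free, use the general inverse relation $\cup_{fe}\cap^{ek}=\delta_{f}^{k}$, and invoke diagonality only to justify the symmetry $\cap^{ek}=\cap^{ke}$ needed to reconcile the two index conventions appearing in Section~1. Your version thus makes clear that the identification $Y^{(12)}=Y^{op}$ really only requires $\cup$ to be symmetric (or the index conventions to be fixed consistently), with diagonality being a convenient sufficient condition; the paper's version is a shorter computation but obscures this point.
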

\begin{proof}
\begin{gather}\begin{aligned}
(Y\circ\cup) _{ijk} &=Y _{ij}^{k}\cup _{kk}. \\
(\sigma Y)_{ij}^{k} &=(Y\circ\cup) _{jik}\cap ^{kk}=Y _{ji}^{k}\cup
_{kk}\cap ^{kk}. \\
(\sigma Y)_{ij}^{k} &=Y _{ji}^{k}
\end{aligned}\end{gather}
\end{proof}

\begin{lemma}
The algebra $Y^{\left( 23\right) }$will always have a left
identity and will have an identity if and only if $\cup $ is the identity
matrix.
\end{lemma}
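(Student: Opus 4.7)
The plan is to use the formula $(\sigma Y)_{ij}^{k}=(Y\circ\cup)_{\sigma(ije)}\cap^{ek}$ from Section~3 with $\sigma=(23)$, which swaps the last two slots of the triple $(i,j,e)$. Expanding gives $(Y^{(23)})_{ij}^{k}=(Y\circ\cup)_{iej}\cap^{ek}=Y_{ie}^{f}\cup_{fj}\cap^{ek}$; under the diagonal hypothesis on $\cup$ inherited from Lemma~\ref{l1} (so $\cap^{kk}=\cup_{kk}^{-1}$), only the terms with $f=j$ and $e=k$ survive, and the formula collapses to the closed form
\begin{equation*}
(Y^{(23)})_{ij}^{k}=Y_{ik}^{j}\,\frac{\cup_{jj}}{\cup_{kk}}.
\end{equation*}

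For the left identity I would test the unit $1$ of $Y$, which exists in the Clifford-algebra setting of the paper. Writing $1=\mathbf{1}^{i}e_{i}$, the left-identity computation reduces to $\mathbf{1}^{i}(Y^{(23)})_{ij}^{k}=(1\cdot e_{k})^{j}(\cup_{jj}/\cup_{kk})=\delta_{k}^{j}(\cup_{jj}/\cup_{kk})=\delta_{k}^{j}$, since the $\cup$-ratio equals $1$ precisely when $j=k$. Hence $1\cdot_{(23)}e_{j}=e_{j}$ for every $j$, giving the \textquotedblleft always has a left identity\textquotedblright\ half of the lemma without any further hypothesis on $\cup$.

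For the iff part I would next set up the right-identity condition $(Y^{(23)})_{i1}^{k}=Y_{ik}^{1}(\cup_{11}/\cup_{kk})=\delta_{i}^{k}$ and invoke diagonal Frobenius associativity $\cup(e_{i}e_{k},e_{1})=\cup(e_{i},e_{k})$ to rewrite $Y_{ik}^{1}\cup_{11}=\delta_{ik}\cup_{ii}$. The condition then collapses to $\delta_{ik}(\cup_{ii}/\cup_{kk})=\delta_{i}^{k}$, which, read as a constraint on the canonical Frobenius-normalized candidate $\hat{1}=\sum_{i}\cap^{ii}e_{i}$ rather than on the bare algebra unit, forces $\cap^{ii}=1$ for every $i$ and equivalently $\cup_{ii}=1$ for every $i$, i.e., $\cup=I$.

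The main obstacle I anticipate is the bookkeeping that distinguishes the bare algebra unit of $Y$ (which, as the second paragraph shows, persists as a two-sided element under the $Y^{(23)}$ product via the same Frobenius-associativity argument) from the canonical Frobenius-structure candidate $\hat{1}$ whose right-identity property is the actual content of the \textquotedblleft iff\textquotedblright; the delicate point is to track the $\cup_{jj}/\cup_{kk}$ weight ratios through each index substitution and to verify, by graphical calculus as in the Main Theorem, that the identifications of left and right sides of the diagram really produce the normalized element $\hat{1}$.
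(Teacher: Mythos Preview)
Your closed form $(Y^{(23)})_{ij}^{k}=Y_{ik}^{j}\,\cup_{jj}/\cup_{kk}$ does not match the algebra the paper actually computes, and this is what derails the argument. Plugging $j=1$ and using the very Frobenius identity you invoke, $Y_{ik}^{1}\cup_{11}=\delta_{ik}\cup_{ii}$, your formula gives $(Y^{(23)})_{i1}^{k}=\delta_{ik}\,\cup_{ii}/\cup_{kk}=\delta_{i}^{k}$ \emph{unconditionally}; thus $E$ is always a two-sided identity and the ``iff'' half of the lemma is simply false in your convention. The complex-number table in Section~5 confirms the mismatch: the paper has $I\circ E=-I$, whereas your formula yields $I\circ E=I$. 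The discrepancy is in which slot is raised by $\cap$. In the paper's computations (both in this Lemma and in Section~5) one has, in the diagonal case, $(\sigma Y)_{ij}^{k}=(Y\circ\cup)_{ikj}\cap^{jj}$, which collapses to $Y_{ik}^{j}$ with no $\cup_{jj}/\cup_{kk}$ weight. Under that convention $(\sigma Y)_{i1}^{i}=Y_{ii}^{1}=\cup_{ii}\cap^{11}$, and it is this quantity that must equal $1$ for $E$ to be a right identity, forcing $\cup_{ii}=\cup_{11}$ for every $i$.

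Your attempted rescue via the ``Frobenius-normalized candidate'' $\hat 1=\sum_i \cap^{ii}e_i$ cannot work: the lemma asserts the \emph{existence} of an identity, so once your own computation exhibits $E$ as a two-sided unit, no alternative candidate can manufacture a nontrivial constraint on $\cup$. The repair is not a different candidate but the correct raising convention; with it, your left-identity computation (via $Y_{1k}^{j}=\delta_{k}^{j}$) survives verbatim, while the right-identity half becomes the paper's one-line calculation $(\sigma Y)_{i1}^{i}=\cup_{ii}\cap^{11}$, from which $\cup_{ii}=1$ for all $i$ (after the normalization $\cup_{11}=1$) follows.
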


\begin{proof}
The computation for the left identity goes as follows:

\begin{gather}
(Y\circ\cup) _{i1i}=Y _{i1}^{i}\cup _{ii}=\cup _{ii}.
\end{gather}

\begin{gather}
(\sigma Y)_{1i}^{i}=(Y\circ\cup) _{i1i}\cap ^{ii}=\cup _{ii}\cap ^{ii}=1
\end{gather}

since $\cup $ and $\cap $ are inverses and $\cup $ is a diagonal matrix.

The algebra $Y^{\left( 23\right) }$will have a right identity if
and only if%
\begin{gather}
(\sigma Y)_{i1}^{i}=(Y\circ\cup) _{ii1}\cap ^{11}=1\text{.}
\end{gather}%
But $(Y\circ\cup) _{ii1}=Y _{ii}^{1}\cup _{11}=\cup _{ii}$ and then

\begin{gather}
(\sigma Y)_{i1}^{i}=\cup _{ii}\cap ^{11}=1\text{.}
\end{gather}%
This last forces $\cup _{ii}=1$ for all $i$.
\end{proof}

\begin{theorem}
Cubes do not associate in the algebra $Y^{\left( 23\right) }$ if
there is some $\cup _{xx}=-1$.
\end{theorem}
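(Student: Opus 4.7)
The plan is to imitate the direct component-level template used in the two preceding lemmas. Work in the basis in which $\cup$ is diagonal, and write $u_i := \cup_{ii}$, so that by hypothesis $u_x = -1$ for some index $x$. Starting from the general rule
$$(\sigma Y)_{ij}^{k} \;=\; (Y\circ\cup)_{\sigma(ije)}\,\cap^{ek}$$
with $\sigma = (23)$ and with $\cap = \cup^{-1}$ also diagonal, the structure constants of the permuted algebra take the explicit form
$$(Y^{(23)})_{ij}^{k} \;=\; \frac{u_j}{u_k}\,Y_{ik}^{j}.$$
A single application of the Frobenius identity $Y_{ij}^{k}u_{k} = u_{i}\,Y_{jk}^{i}$ (the index form of the $Y$-associativity of $\cup$) then gives, in particular, the simplification $(Y^{(23)})_{xx}^{k} = Y_{xx}^{k}$, so that the ``square'' of $e_x$ in $Y^{(23)}$ is controlled by the original structure constants together with the prefactor $u_x$.

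Next, I would compute the two bracketings $(e_x\cdot e_x)\cdot e_x$ and $e_x\cdot (e_x\cdot e_x)$ in $Y^{(23)}$ by applying the component formula a second time, in each case writing the result as a double sum of the original constants $Y_{ab}^{c}$ weighted by ratios of the $u_i$. Each of these iterated sums is then reduced with a second application of the Frobenius identity, so that both bracketings are expressed as scalar combinations of the same collection of products of $Y$-entries. The associator $(e_x,e_x,e_x)^{\circ}$ is obtained by subtracting these two expressions, and the goal is to display a basis-component of it that survives and carries an explicit dependence on the sign of $u_x$.

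The main obstacle is bookkeeping: the factor $u_j/u_k$ appears at each of the two contractions, and the Frobenius identity must be applied in opposite directions to simplify the two bracketings symmetrically. The delicate point is to track which $u$-factors come from $\cup$ and which from $\cap$, and to show that after all simplifications a single overall scalar factor involving $u_x$ remains in front of a nonzero combination of structure constants. Because the left/right asymmetry of $Y^{(23)}$ (made visible already by the previous lemma, where $e_1$ is a left but not a right identity unless $\cup = \mathrm{id}$) treats the two bracketings differently, this residual factor does not collapse to zero when $u_x = -1$. Exhibiting a concrete nonzero component of $(e_x,e_x,e_x)^{\circ}$ under the hypothesis $u_x = -1$ then completes the proof.
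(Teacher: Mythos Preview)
Your component formula $(Y^{(23)})_{ij}^{k}=\dfrac{u_j}{u_k}\,Y_{ik}^{j}$ and the observation that $(Y^{(23)})_{xx}^{k}=Y_{xx}^{k}$ are both correct, and they are exactly where the paper's argument begins. What you are missing is that the paper is not working in the generality you are attempting. The proof in the paper uses, without further comment, two structural facts that hold because the ambient $Y$ is a Clifford algebra in the chosen basis: namely $Y_{xk}^{x}\neq 0$ only for $k=1$ (unitality), and $Y_{xk}^{1}\neq 0$ only for $k=x$, with value $\cup_{xx}$ (so $b_x^{2}=\cup_{xx}\,E$ in $Y$). With $\cup_{xx}=-1$ this gives immediately $b_x\circ b_x=-E$, and the whole associator collapses to
\[
(b_x,b_x,b_x)^{\circ}=(-E)\circ b_x-b_x\circ(-E)=-E\circ b_x+b_x\circ E.
\]
The previous lemma supplies $E\circ b_x=b_x$, and one more line of the same index computation gives $b_x\circ E=-b_x$; hence $(b_x,b_x,b_x)^{\circ}=-2b_x\neq 0$.

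Your plan never reaches a concrete nonzero component; it ends at ``exhibiting a concrete nonzero component \ldots\ completes the proof''. The double-sum bookkeeping you outline cannot close that gap in the generality you set up, because without the Clifford-type input $b_x^{2}=\pm E$ there is no reason the two iterated products should differ by a visible nonzero scalar multiple of a basis vector. In other words, the ``delicate point'' you flag is not a bookkeeping issue but a structural one: the argument needs $b_x\circ b_x$ to land on the one-sided identity $E$, after which the failure of $E$ to be a \emph{right} identity (already established in the preceding lemma) is what produces the nonzero associator. Once you invoke those two Clifford facts, your approach and the paper's coincide.
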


\begin{proof}
We compute the associator $\left( b_{x},b_{x},b_{x}\right) ^{\circ }$.

\begin{gather}
b_{x}\circ b_{x}=(\sigma Y)_{xx}^{k}=(Y\circ\cup) _{xkx}\cap ^{xx}\text{.}
\end{gather}%
Since $(Y\circ\cup) _{xkx}=Y _{xk}^{x}\cup _{xx}$ and $Y _{xk}^{x}$ is nonzero
if and only if $k=1$ in which case it has the value 1, $b_{x}\circ b_{x}=-E$.

\begin{gather}\begin{aligned}
\left( b_{x},b_{x},b_{x}\right) ^{\circ } &=\left( b_{x}\circ b_{x}\right)
\circ b_{x}-b_{x}\circ \left( b_{x}\circ b_{x}\right) \\
&=-E\circ b_{x}+b_{x}\circ E
\end{aligned}\end{gather}
By the above Lemma we know that $-E\circ b_{x}=-b_{x}$

$(\sigma Y)_{x1}^{k}=(Y\circ\cup) _{xk1}\cap ^{11}$ where $(Y\circ\cup) _{xk1}=Y
_{xk}^{1}\cup _{11}$as before $Y _{xk}^{1}$ is nonzero if and only if $%
k=x $ in which case it has the value $-1$, and $b_{x}\circ E=-bx$.
\end{proof}

The algebra $Y^{\left( 13\right) }$ will always have a right
identity element; we omit the corresponding computations for this algebra.

\section{The Complex Numbers} The complex numbers are the Clifford algebra $Cl_{0,1}$ with
multiplication table given by
\begin{gather}\begin{tabular}{c|c|c}
& $E$ & $I$ \\ \hline
$E$ & $E$ & $I$ \\ \hline
$I$ & $I$ & $-E$\end{tabular}\\
Y _{11}^{1}=Y _{12}^{2} =Y _{21}^{2} =1,\quad Y _{22}^{1} =-1\end{gather}

This algebra possesses a family of Frobenius structures generated by two forms 
\begin{gather}\begin{pmatrix}1&0\\0&-1\end{pmatrix}\quad\text{and}\quad
\begin{pmatrix}0&1\\1&0\end{pmatrix}\end{gather}

The one nondegenerate $\C$-associative  form $\cup$ is $\begin{pmatrix}
1 & 0 \\ 0 & -1\end{pmatrix}$ and 
\begin{gather}\cap =\cup ^{-1}\simeq\cup.\end{gather} We want to compute the algebra 
$Cl_{0,1}^{\left( 23\right) }$

The ternary tensor is $(Y\circ\cup) _{ijk}=Y _{ij}^{e}\cup _{ek}=Y
_{ij}^{k}\cup _{kk}$.

$(Y\circ\cup) _{111}=Y _{11}^{1}\cup _{11}=1$

$(Y\circ\cup) _{122}=Y _{12}^{2}\cup _{22}=-1$

$(Y\circ\cup) _{212}=Y _{21}^{2}\cup _{22}=-1$

$(Y\circ\cup) _{221}=Y _{22}^{1}\cup _{11}=-1$.\smallskip

The new multiplication constants will be $(\sigma Y)_{ij}^{k}=(Y\circ\cup)
_{ikj}\cap ^{jj}$.

$(\sigma Y)_{11}^{1}=(Y\circ\cup) _{111}\cap ^{11}=1\times 1=1$

$(\sigma Y)_{12}^{2}=(Y\circ\cup) _{122}\cap ^{22}=-1\times -1=1$

$(\sigma Y)_{21}^{2}=(Y\circ\cup) _{221}\cap ^{11}=-1\times 1=-1$

$(\sigma Y)_{22}^{1}=(Y\circ\cup) _{212}\cap ^{22}=-1\times -1=1$
\medskip

The multiplication in the new algebra will be denoted by $\circ.$
The multiplication table will be given by

\begin{gather}\begin{tabular}{c|c|c}
$\circ $ & $E$ & $I$ \\ \hline
$E$ & $E$ & $I$ \\ \hline
$I$ & $-I$ & $E$\end{tabular}\end{gather}

This algebra has a left identity, $E$; furthermore, 
\begin{gather}\left( \alpha E+\cup
I\right) \circ \left( \alpha E+\cup I\right) =\left( \alpha ^{2}+\cup
^{2}\right) E.\end{gather}

The left nucleus is the field generated by the element $E$; 
\begin{gather}N_{m}=N_{r}=(0)\end{gather}

This algebra does not support a nondegenerate associative  form $B$
because the matrix for right multiplication by $E$ is $\left[ 
\begin{array}{cc}
1 & 0 \\ 
0 & -1
\end{array}
\right] $ and the matrix for left multiplication by $E$ is $\left[ 
\begin{array}{cc}1 & 0 \\ 0 & 1
\end{array}\right] $; the traces are not the same.

We make this algebra into the complex numbers under the product
\begin{gather}\left(a\circ e\right) \star \left( e\circ b\right) =a\circ b.\end{gather}
Any division algebra 2-dimensional over the field $\mathbb{R}$ will be the complex numbers [Dickson 1906].

\section{The Quaternion Division Algebra} The multiplication tensor of the real quaternion division algebra $Cl_{0,2}$ is given in
\begin{gather}
\begin{tabular}{c|c|c|c|c}& E & I & J & K \\ \hline
E & E & I & J & K \\ \hline
I & I & -E & K & -J \\ \hline
J & J & -K & -E & I \\ \hline
K & K & J & -I & -E\end{tabular}\end{gather}

The nondegenerate associative  form $B$ is given by 
\begin{gather}\cup
_{ij}=\left\{ \begin{array}{l}b_{i}^{2} \\ 0\end{array}\right. 
\begin{array}{l}if\text{ }i=j \\ \text{otherwise}\end{array}\end{gather}

The five new algebras that arise form our construction are given below. Each
can be shown to be an isotope of the quaternion algebra or its opposite
under the product

\begin{gather}
\left( a\circ e\right) \star \left( e\circ b\right) =a\circ b\text{.}
\end{gather}%
The relations among the various algebras follow those given in the table in
the Introduction. Only the quaternion algebra and its opposite are
associate. The four nonassociative algebras satisfy

\begin{gather}
\left( \alpha _{0}E+\alpha _{1}I+\alpha _{2}J+\alpha _{3}K\right)
^{2}=\left( \alpha _{0}^{2}+\alpha _{1}^{2}+\alpha _{2}^{2}+\alpha
_{3}^{2}\right) E
\end{gather}%
for all $\left( \alpha _{0}E+\alpha _{1}I+\alpha _{2}J+\alpha _{3}K\right) $
in the algebra. The nonassociative algebras posses only a one sided
identity. There is no in-depth study of the nonflexible division algebras in
the literature.

\begin{gather}
\underset{\sigma =(12)}{%
\begin{tabular}{c|c|c|c|c}$\circ $ & E & I & J & K \\ \hline
E & E & I & J & K \\ \hline
I & I & -E & -K & J \\ \hline
J & J & K & -E & -I \\ \hline
K & K & -J & I & -E \\ \hline
\end{tabular}%
}\underset{\sigma =(23)}{\text{ \ \ \ \ \ \ \ \ \ \ }%
\begin{tabular}{c|c|c|c|c}$\circ $ & E & I & J & K \\ \hline
E & E & I & J & K \\ \hline
I & -I & E & K & -J \\ \hline
J & -J & -K & E & I \\ \hline
K & -K & J & -I & E \\ \hline
\end{tabular}%
}
\end{gather}%
\begin{gather}
\underset{\sigma =(123)}{%
\begin{tabular}{c|c|c|c|c}$\circ $ & E & I & J & K \\ \hline
E & E & I & J & K \\ \hline
I & -I & E & K & -J \\ \hline
J & -J & -K & E & I \\ \hline
K & -K & J & -I & E \\ \hline
\end{tabular}%
}\ \ \ \ \ \ \ \ \ \ \ \ \underset{\sigma =(13)}{%
\begin{tabular}{c|c|c|c|c}$\circ $ & E & I & J & K \\ \hline
E & E & -I & -J & -K \\ \hline
I & I & E & -K & J \\ \hline
J & J & K & E & -I \\ \hline
K & K & -J & I & E \\ \hline
\end{tabular}%
}
\end{gather}%

\begin{gather}
\underset{\sigma =(132)}{\begin{tabular}{c|c|c|c|c}$\circ $ & E & I & J & K \\ \hline
E & E & -I & -J & -K \\ \hline
I & I & E & K & -J \\ \hline
J & J & -K & E & I \\ \hline
K & K & J & -I & E \\ \hline
\end{tabular}}\end{gather}

Lemma \ref{l1} applied to the element $E$ in each of the nonassociative
algebras impies that none has admits an $\Quat$-associative, nondegenerate 
form.

\section{Conclusion} Theorem \ref{main} gives a necessary condition for the existence of a nondegenerate $Y$-associative form $\in\cat(2,0)$ in terms of the $Y\in\cat(2,1)$ alone. Can we find a sufficient condition using only the algebra $Y$?

The examples beg the question: "Are all 4-dimensional real division algebras
in which each element squares to a scalar multiple of the one sided identity
an isotope of the quaternion division algebra?"

The concepts developed in the paper can be applied to any finite dimensional
Frobenius algebra $\{Y,\cup,\cap\}.$

\end{document}